\theoremstyle{plain}
\newtheorem{theorem}{Theorem}[section]
\newtheorem*{theorem*}{Theorem}
\newtheorem{proposition}[theorem]{Proposition}
\newtheorem{lemma}[theorem]{Lemma}
\newtheorem*{cor*}{Corollary}
\newtheorem{defi}[theorem]{Definition}
\newcommand {\R} {\mathbb{R}} \newcommand {\Z} {\mathbb{Z}}
\newcommand {\T} {\mathbb{T}} \newcommand {\N} {\mathbb{N}}
\newcommand {\p} {\partial}
\newcommand {\dt} {\partial_t}
\newcommand{\cutoff}[1]{\left\ulcorner{#1}\right\urcorner}
\begin{document}
\title{On nonlinear Landau damping and Gevrey regularity}
\begin{abstract}
  In this article we study the problem of nonlinear Landau damping for the Vlasov-Poisson
  equations on the torus. As our main result we show that for perturbations
  initially of size $\epsilon>0$ and time intervals $(0,\epsilon^{-N})$ one
  obtains nonlinear stability in regularity classes larger than Gevrey $3$, uniformly in $\epsilon$.
   As a complementary result we construct families of Sobolev regular initial
   data which exhibit nonlinear Landau damping.
   Our proof is based on the methods of Grenier, Nguyen and Rodnianski \cite{grenier2020landau}.
\end{abstract}
\author{Christian Zillinger}
\address{Karlsruhe Institute of Technology, Englerstraße 2,
  76131 Karlsruhe, Germany}
\email{christian.zillinger@kit.edu}
\keywords{Vlasov-Poisson, plasma echoes, resonances, stability}
\subjclass[2020]{35Q83, 35B40, 35Q49}
\maketitle
\tableofcontents

\section{Introduction}
\label{sec:intro}

In this article we consider the nonlinear stability problem for the
Vlasov-Poisson equations
\begin{align}
  \label{eq:VP}
  \begin{split}
  \dt f + v\cdot \nabla_x f + F \cdot \nabla_v f &=0, \\
  (t,x,v) &\in \R \times \T^d \times \R^d, \\
  \rho(t,x)&= \int f(t,x,v) dv, \\
  F(t,x) &= \nabla \Delta^{-1} \rho(t,x), \\
  (t,x,v) &\in (0,\epsilon^{-N})\times \T \times \R,
  \end{split}
\end{align}
for \emph{finite}, but very large times and for perturbations initially of size
$\epsilon>0$.
For simplicity of presentation we restrict to perturbations around the zero solution $f(t,x,v)= 0$.
Here $f(t,x,v)\in \R$ models the phase space-density of a plasma and $F(t,x) \in
\R$
corresponds to a mean-field electric force field generated by the spatial
density $\rho(t,x)\in \R$.

The study of the long-time behavior of the Vlasov-Poisson equations and, in particular, the
phenomenon of Landau damping (decay of $F(t,x)$ as $t\rightarrow \infty$ at very
fast rates), is a very active field of research. For an overview we refer to the
seminal works of Villani and Mouhot \cite{Villani_long,Villani_script,bedrossian2016landau} who for
the first time established nonlinear stability and Landau damping.

Due to nonlinear resonances \cite{malmberg1968plasma} these results require
extremely strong regularity assumptions. Indeed in
\cite{bedrossian2016nonlinear} (see also \cite{zillinger2020landau} and \cite{grenier2022plasma}) it is shown that the
nonlinear equations exhibit chains of resonances and associated growth by
\begin{align*}
  \exp(|\epsilon\eta|^{1/3})
\end{align*}
for perturbations frequency localized at $\eta$ (with respect to $v$).
Hence no uniform (in time) stability results can be expected to hold in weaker than Gevrey $3$
regularity (that is, $L^2$ spaces with such an exponential decay in Fourier space).
Nevertheless, as shown in \cite{zillinger2020landau} in principle the ``physical
notion'' of Landau damping (that is, $F(t,x)$ decays in time) does not require
stability of $f(t,x,v)$. In particular, that notion of damping might be more ``robust''
than suggested by the high regularity requirements and suggests that Landau
damping might persist even at lower than Gevrey regularity of perturbations.

As a first step towards such a result in this article we adapt the
methods of \cite{grenier2020landau} and ask the following questions:
\begin{itemize}
\item If we consider perturbations initially of size $0< \epsilon \ll 1$, how
  does the possible (optimal) norm inflation depend on $\epsilon$?
\item Can the methods of \cite{grenier2020landau} be modified to reach optimal
  Gevrey classes?
\item If we consider a finite time interval
  \begin{align*}
  (0,\epsilon^{-N}), 
  \end{align*}
  how does this change upper and lower bounds and can we establish stability in
  better than Gevrey $3$ regularity uniformly in $\epsilon$?
\end{itemize}
This coupling between the size of the perturbation and the time scale is
motivated by works on echo chains in the inviscid Boussinesq equations
\cite{bedrossian21,zillinger2022stability}, where one naturally is restricted to
a time scale $(0,\epsilon^{-2})$. 

Our main results are summarized in the following theorem.
\begin{theorem}
  \label{thm:main}
  Let $0<\epsilon \ll 0.1$ and $T=\epsilon^{-N}$.
  Then there exists $\gamma=\gamma(N)$ independent of $\epsilon$ such that the
  nonlinear Vlasov-Poisson equations are stable in Gevrey $\frac{1}{\gamma}>3$.

  More precisely, there exists $\beta\geq \frac{1}{12}$ and $C>0$ such that if the Fourier transform of initial
  data satisfies
  \begin{align}
    \label{eq:E1initial}
    & \sum_k \int \langle k,\eta \rangle^{8} \exp(C \log(T) \min (\epsilon^{\beta}\langle k,\eta \rangle^{1/3}, \langle k,\eta \rangle^{\gamma}))\\
    &\quad (|\mathcal{F}|f_0(k,\eta)|^2 + |\p_\eta\mathcal{F}f_0(k,\eta)|^2) d\eta \leq \frac{1}{100} \epsilon, 
  \end{align}
  then the bound \eqref{eq:E1initial} remains true for $g(t,x,v)=f(t,x-tv,v)$ for all times $t \in (0,T)$
  up to a loss in the constants $C$ and $\frac{1}{100}$.
    Moreover, on that same time interval the force field perturbation satisfies
    the bound
    \begin{align*}
      |F(t,x)|\lesssim \epsilon \exp(-\epsilon^{\beta}t^{\gamma}).
    \end{align*}
\end{theorem}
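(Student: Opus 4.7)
The plan is to implement the Grenier--Nguyen--Rodnianski scheme with a multiplier tailored to the $\epsilon$-dependent threshold frequency $\langle k,\eta\rangle\sim \epsilon^{-\beta/(1/3-\gamma)}$ implicit in
\begin{align*}
A(t,k,\eta):=\exp\bigl(C\log(T)\cdot\min(\epsilon^{\beta}\langle k,\eta\rangle^{1/3},\langle k,\eta\rangle^{\gamma})\bigr).
\end{align*}
First I would pass to the profile $g(t,x,v)=f(t,x-tv,v)$, which eliminates the free transport. In Fourier one has $\hat\rho(t,k)=\hat g(t,k,tk)$, $\hat F(t,k)=-ik^{-1}\hat g(t,k,tk)$, and the profile equation reads
\begin{align*}
\partial_{t}\hat g(t,k,\eta)=(tk-\eta)\sum_{\ell}\ell^{-1}\hat g(t,\ell,t\ell)\,\hat g(t,k-\ell,\eta-t\ell).
\end{align*}
I would then set up a bootstrap on
\begin{align*}
E(t):=\sum_{k}\int \langle k,\eta\rangle^{8}A(t,k,\eta)^{2}\bigl(|\hat g|^{2}+|\partial_{\eta}\hat g|^{2}\bigr)d\eta,
\end{align*}
starting from $E(0)\leq \tfrac{1}{100}\epsilon$ and aiming at $E(t)\leq \tfrac{1}{50}\epsilon$ on $[0,T]$.

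The force bound is an immediate consequence of the bootstrap. By Cauchy--Schwarz and the polynomial weight $\langle k,tk\rangle^{8}$,
\begin{align*}
|k|\,|\hat F(t,k)|=|\hat g(t,k,tk)|\lesssim \sqrt{E(t)}\,\langle k,tk\rangle^{-4}A(t,k,tk)^{-1}.
\end{align*}
The multiplier is engineered so that $A(t,k,tk)\geq \exp(\epsilon^{\beta}t^{\gamma})$ uniformly on $t\in[0,T]$: for $t$ past the threshold $\epsilon^{-\beta/(1/3-\gamma)}$ the active branch is $\langle k,tk\rangle^{\gamma}\gtrsim t^{\gamma}$, amplified by the prefactor $C\log T\gg \epsilon^{\beta}$; for smaller $t$ the other branch dominates with a comparable margin. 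Summing in $k$ yields the advertised $\|F(t)\|_{L^{\infty}_{x}}\lesssim \epsilon\exp(-\epsilon^{\beta}t^{\gamma})$.

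To close the bootstrap I compute $\tfrac{d}{dt}E$. The key estimate is the subadditive splitting
\begin{align*}
\min(\epsilon^{\beta}\langle k,\eta\rangle^{1/3},\langle k,\eta\rangle^{\gamma})\lesssim \min(\epsilon^{\beta}\langle \ell,t\ell\rangle^{1/3},\langle \ell,t\ell\rangle^{\gamma})+\min(\epsilon^{\beta}\langle k-\ell,\eta-t\ell\rangle^{1/3},\langle k-\ell,\eta-t\ell\rangle^{\gamma}),
\end{align*}
which holds because $\xi\mapsto \min(\epsilon^{\beta}\xi^{1/3},\xi^{\gamma})$ is concave on $[0,\infty)$ with value $0$ at $0$, hence subadditive. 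This factorises $A(t,k,\eta)\lesssim A(t,\ell,t\ell)\,A(t,k-\ell,\eta-t\ell)$. The first factor gets absorbed into $\hat F(t,\ell)$ via the force decay of the previous paragraph; the second pairs with $\hat g(t,k-\ell,\eta-t\ell)$ to produce a second copy of $\sqrt{E(t)}$. The reaction factor $(tk-\eta)$ is handled by distributing a $\partial_{\eta}$ onto the profile component of $E$ and exploiting the Sobolev weight $\langle k,\eta\rangle^{8}$. The commutator $[\partial_{\eta},A]$ produces only a term of size $C\log(T)\gamma\langle k,\eta\rangle^{\gamma-1}A$, which is harmless. Altogether one obtains $\tfrac{d}{dt}E(t)\lesssim \exp(-\epsilon^{\beta}t^{\gamma})\,E(t)$, and Gr\"onwall closes the bootstrap once $\gamma=\gamma(N)$ and $\beta$ are chosen so that $\int_{0}^{T}\exp(-\epsilon^{\beta}\tau^{\gamma})d\tau \ll 1$ after accounting for the $\sqrt{E}$ factor.

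The main obstacle is the reaction factor $(tk-\eta)$: at the resonant frequencies $\eta\approx t\ell$ it drives the plasma echo cascade responsible for the Gevrey~$3$ lower bound in \cite{bedrossian2016nonlinear, zillinger2020landau, grenier2022plasma}. The multiplier $A$ has been designed so that below the threshold frequency $\langle k,\eta\rangle\sim \epsilon^{-\beta/(1/3-\gamma)}$ it pays the full Gevrey-$3$ cost $\exp(\epsilon^{\beta}\langle k,\eta\rangle^{1/3})$ of the cascade, while above the threshold it only pays the weaker $\exp(\langle k,\eta\rangle^{\gamma})$ compatible with Gevrey-$1/\gamma>3$. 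Balancing the cumulative echo budget against the multiplier budget $C\log T$ available on $[0,T]$ is what forces $\beta\geq \tfrac{1}{12}$ and fixes $\gamma=\gamma(N)$. The remaining bookkeeping for the distribution of $\partial_{\eta}$ derivatives across the profile convolution is routine once the scalar multiplier splitting above is in hand.
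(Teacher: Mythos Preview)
Your plan has a genuine gap at the step you flag as ``routine'': the reaction factor $(\eta-kt)$ cannot be handled by ``distributing a $\partial_\eta$'' or by the Sobolev weight. In Fourier, $(\eta-kt)$ is the symbol of $(\nabla_v+t\nabla_x)$ acting on $g$, so the nonlinearity loses one full derivative, and the subadditive splitting $A_{k,\eta}\lesssim A_{\ell,t\ell}A_{k-\ell,\eta-t\ell}$ by itself does nothing to recover it. The paper (following Grenier--Nguyen--Rodnianski) deals with this in two steps you have omitted. First, one uses the anti-symmetry of $\int (\eta-kt)\,\hat F(\ell)\,A_{k-\ell,\eta-t\ell}\hat g(k-\ell,\eta-t\ell)\,\overline{A_{k,\eta}\hat g(k,\eta)}$ under $(k,\eta)\leftrightarrow(k-\ell,\eta-t\ell)$ to kill the main term, leaving only the commutator $(A_{k,\eta}-A_{k-\ell,\eta-t\ell})$. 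Second, this commutator gains back the factor $\langle\ell,t\ell\rangle/\langle k,\eta\rangle$ but unavoidably produces a term of size $\cutoff{k,\eta}A_{k,\eta}$, i.e.\ $\partial_z$ of the weight. This is precisely why the paper's multiplier carries a \emph{time-decreasing} parameter $z(t)=2\log T-\log t$ rather than your constant $C\log T$: the resulting inequality is $\partial_t E_1\leq C_1 E_2 E_1 + C_2\epsilon^{-\beta}(1+t)E_2\,\partial_z E_1$, and only the negative contribution $\dot z(t)\partial_z E_1$ from the chain rule absorbs the second term. With a frozen exponent there is no such damping and the bootstrap cannot close.

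A second structural difference is that the paper does \emph{not} read off the decay of $\rho$ (equivalently $F$) directly from the bootstrap on the profile energy. Instead it runs a separate $\ell^\infty$ energy $E_2$ on $\tilde\rho$ and analyses the integral equation $\tilde\rho(t,k)=\tilde g(0,k,kt)+\sum_{l}\int_0^t \tfrac{k(t-s)}{l}\tilde\rho(s,l)\tilde g(s,k-l,kt-ls)\,ds$ by a careful case split (diagonal $l=k$, reaction $|kt-ls|\le kt/2$, transport $|kt-ls|\ge kt/2$). It is in this analysis---not in the $E_1$ estimate---that the echo bound $\epsilon\eta/l^3$, the frequency cut-off $\eta_*\ge T^2$, and the constraints $\beta\le\tfrac{1}{3\sigma}$, $\alpha>1-2\gamma$ actually enter and force the choice of $\gamma(N)$. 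Your single-energy scheme bypasses this entirely, so even if the derivative loss were fixed you would still need to explain where the sub-Gevrey-$3$ gain comes from.
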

We in particular emphasize the following differences and improvements compared
to \cite{grenier2020landau}:
\begin{itemize}
\item In \cite{grenier2020landau}, whose methods we adapt, a similar result is
  established with
  \begin{align*}
    \exp(C \langle k,\eta \rangle^{1/3+ \delta}); \delta>0,
  \end{align*}
  instead and $T$ is allowed to be infinite.
\item The present result reaches $\frac{1}{3}$ ($\delta=0$) at the cost of
  time-dependent prefactor $\log(T)$. However, for $T=\epsilon^{-N}$ this can be
  absorbed into a slight loss of $\beta$ (and $\gamma$).
\item As our main novelties we highlight the frequency cut-off and the improved
  Gevrey classes. To the author's knowledge this is the first nonlinear Landau
  damping result (for finite time) for \emph{generic small data} in sub Gevrey $3$
  regularity.
\item As we discuss in the following Lemma \ref{lem:trivial}, it is easy
  to construct special data at arbitrarily low regularity which exhibits Landau
  damping as $t\rightarrow \infty$. However, this data needs to satisfy rather
  restrictive Fourier support assumptions and is unstable as $t\rightarrow
  -\infty$.
  As seen from the norm inflation results of
  \cite{bedrossian2016nonlinear,zillinger2020landau} for generic data some level
  of Gevrey regularity is necessary for uniform in time stability.
\item As we discuss in Section \ref{sec:model}, a model for plasma echoes suggests that optimal
  bounds should be given by $\beta=\frac{1}{3}$ and $\gamma_N=
  \frac{1}{3}\frac{3N-2}{3N-1}$. However, since the current method of proof not
  only requires solutions to remain bounded but to decay with an integrable rate
  $(1+|t|)^{-\sigma+2}$, this restricts us to slightly smaller values of $\beta$ and
  hence larger values of $\gamma= \frac{1}{3} - \frac{\beta}{2N}$.
\end{itemize}

As an independent result, the following lemma constructs examples of ``trivial'' solutions, which
are only Sobolev regular but nevertheless exhibit Landau damping (see also \cite{grenier2022plasma}).
\begin{lemma}
  \label{lem:trivial}
   Let $\psi \in \mathcal{S}(\R)$ be a Schwartz function whose Fourier transform
   is supported in a ball of radius $0.1$ around $0$.
   Then for any $s\geq 0$ and any sequence $(c_k)_{k} \in \ell^2(\N)$ the function
   \begin{align}
     \label{eq:trivialexample}
     f(t,x,v)= \Re \sum_{k\geq 1}c_k (1+k^2)^{\frac{s}{2}}  e^{ikx} e^{i(-k-kt)v} \psi(v)
   \end{align}
   is an element of the Sobolev space $H^s(\T\times\R)$ for all times $t>0$ and is a solution of
   the Vlasov-Poisson equations for $t \in (0,\infty)$. Moreover,
   $f(t,x+tv,v)=f(0,x,v)$ is independent of time and $F(t,x)=0$. Hence this
   solution trivially exhibits both scattering and Landau damping.
 \end{lemma}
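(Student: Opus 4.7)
My plan is a direct Fourier-side verification of three claims in sequence: (i) $F\equiv 0$, so Vlasov-Poisson degenerates to free transport; (ii) the ansatz solves this transport equation and automatically satisfies the shear identity $f(t,x+tv,v)=f(0,x,v)$; and (iii) $f(t,\cdot,\cdot)\in H^s(\T\times\R)$. There is no need to invoke Theorem~\ref{thm:main}; the ansatz is explicit enough that everything reduces to manipulations on Fourier side.

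The first step is the core observation. I would integrate the ansatz in $v$ to obtain $\rho$. The $v$-integral of $e^{-i(k+kt)v}\psi(v)$ equals $\widehat{\psi}(k(1+t))$, so every nontrivial $x$-Fourier coefficient of $\rho$ is a scalar multiple of $\widehat{\psi}(\pm k(1+t))$. Since $k(1+t)\ge 1>0.1$ whenever $k\ge 1$ and $t\ge 0$, the support hypothesis on $\widehat{\psi}$ forces each such coefficient to vanish. Hence $\rho\equiv 0$, $F\equiv 0$, and the equation degenerates to $(\partial_t+v\partial_x)f=0$.

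Having reduced to free transport, (ii) becomes a term-by-term check: $\partial_t$ acting on $e^{-i(k+kt)v}$ contributes $-ikv$, which cancels the $+ikv$ produced by $v\partial_x$ on $e^{ikx}$, so each summand is annihilated (interchanging the derivative with the sum is legitimate since $\psi$ is Schwartz). The shear identity then follows from the one-line cancellation $e^{ik(x+tv)}e^{-i(k+kt)v}=e^{ikx}e^{-ikv}$, applied summand by summand, showing that $g(t,x,v):=f(t,x+tv,v)$ coincides with $f(0,x,v)$ and in particular is time-independent.

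The only part demanding real care, and the main obstacle I anticipate, is (iii). Here I would use that the $(x,v)$-Fourier support of the $k$-th summand is confined to the thin slab $\{k\}\times[-k(1+t)-0.1,\,-k(1+t)+0.1]$ (and analogously for the complex-conjugate mode at $-k$), and that these supports are mutually disjoint. Plancherel then reduces $\|f(t)\|_{H^s}^2$ to the single-mode sum $\sum_k\|f_k(t)\|_{H^s}^2$, and each summand is evaluated after the translation $\eta\mapsto\eta-k(1+t)$ which centres the integral on the support of $\widehat{\psi}$. The bookkeeping of the polynomial factor that the Sobolev weight $\langle(k,\eta)\rangle^{2s}$ acquires under this translation must be done carefully; the prefactor $(1+k^2)^{s/2}$ in the ansatz is calibrated so that, together with the $\ell^2$ hypothesis on $(c_k)$, the resulting bound on $\|f(t)\|_{H^s}^2$ is finite (albeit possibly time-dependent) at each fixed $t\ge 0$. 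Equivalently, one can perform the entire bound on the profile $g(t,x,v)=f(t,x+tv,v)$, whose Fourier support does not move in time and for which the bookkeeping is simplest.
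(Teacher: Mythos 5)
Your proposal is correct and follows essentially the same route as the paper's proof: vanishing of $\rho$ (hence of $F$) from the Fourier-support condition $k(1+t)\ge 1>0.1$, reduction to free transport solved term by term, and membership in $H^s$ via the pairwise disjointness of the Fourier supports of the summands together with Plancherel. The only difference is that you are more explicit about the Sobolev-weight bookkeeping in the last step, which the paper simply asserts, and you rightly flag that this is the one delicate point (including the possible time dependence of the resulting bound).
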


\begin{proof}[Proof of Lemma \ref{lem:trivial}]
  We note that by the assumption on the Fourier support of $\psi$, the Fourier
  transform of the functions
  \begin{align*}
    (1+k^2)^{\frac{s}{2}} e^{ikx} e^{i(-k-kt)v} \psi(v)
  \end{align*}
  is supported in a ball of size $0.1$ around the frequency $(k,-k-kt)$.
  Since $k\geq 1$ and $t>0$, it follows that these supports are disjoint for
  different values of $k$ and hence for $\psi\neq 0$ the series is convergent in $H^s$ if and
  only if $(c_k)_k \in \ell^2(\N)$.
  Furthermore, since $k+kt \geq 1 > 0.1$ it follows that the spatial density
  $\rho(t,x)$ satisfies
  \begin{align*}
    \rho(t,x)&=\int f(t,x+tv,v) dv = \int \Re \sum_{k\geq 1} c_k e^{ikx} e^{i\eta_k v} \psi(v)\\
    & = \Re \sum_{k\geq 1} e^{ikx} \mathcal{F}(\psi)(\eta_k) =0
  \end{align*}
  and hence also $F(t,x)=\nabla \Delta^{-1}\rho=0$ is trivial. Thus the Vlasov-Poisson equations reduce to
  the free transport equations, which $f(t,x,v)$ solves by construction.
\end{proof}
Such ``traveling wave'' solutions also form the core of nonlinear instability results
\cite{zillinger2020landau,bedrossian2016nonlinear}, where small high-frequency
perturbations of these waves exhibit norm inflation.
More generally one can consider a factor $e^{i(-\eta_k-kt)v}$
with $\eta_k>0.1$ positive and increasing in $k$.
However, we emphasize that the behavior changes drastically when this sign
condition is allowed to be violated or if we require Landau damping for both
$t\rightarrow \infty$ and $t\rightarrow - \infty$. In these cases one needs to
require $c_k$ to decay sufficiently rapidly to satisfy the assumptions of
Theorem \ref{thm:main}.

The remainder of this article is structured as follows:
\begin{itemize}
\item In Section \ref{sec:model} we briefly discuss the plasma echo mechanism
  in terms of a toy model, which exhibits exactly the growth expressed in Theorem
  \ref{thm:main}.
\item In Section \ref{sec:generatorfunctions} we recall the generator function method of \cite{grenier2020landau}.
  Here the added $\epsilon$ dependence and the cut-off constitute the main new
  effects and require more precise estimates. Furthermore, we restructure the
  proof to highlight the role of traveling waves and establish improved
  bounds above a frequency cut-off.
\item The estimates use a bootstrap approach. Here the most important step is
  given by Subsection \ref{sec:E2} establishing improved control of $\rho$
  assuming control of $f$.
\item Conversely, Subsection \ref{sec:E1} establishes improved control of $f$
  given control of $\rho$. Both conditional results are then combined to
  prove Theorem \ref{thm:main}.
\end{itemize}

\section{Plasma echoes and two heuristic models}
\label{sec:model}

In this section we briefly discuss the main norm inflation mechanism of the
Vlasov-Poisson equations, known as \emph{plasma echoes} and the effects of a
time cut-off.
These resonances are also experimentally observed \cite{malmberg1968plasma}.
The interested reader is referred to the seminal works of Villani, Mouhot and
Bedrossian \cite{Villani_script,bedrossian2016nonlinear,zillinger2020landau} for
a more detailed discussion.

As a heuristic model, let $\eta \in \R$ and $k \in \Z$ be given and let $\psi \in
\mathcal{S}(\R)$ be a Schwartz function as in Lemma \ref{lem:trivial}. Then by a
similar argument as in Lemma \ref{lem:trivial} the function
\begin{align*}
  f(t,x,v) = \epsilon \cos(x -tv) \psi(v) + \epsilon \sin(kx + (\eta-kt)v) \psi(v)
\end{align*}
is a solution of nonlinear Vlasov-Poisson equations \eqref{eq:VP} for all times $t$ such
that $|t|>0.1$ and $|\eta-kt| >0.1$. We refer to both summands as (traveling) waves.
According to the linearized dynamics around $0$ (that is, the free transport
dynamics), both waves do not interact and exhibit weak convergence in $L^2$
as $t\rightarrow \infty$.
However, when considering the full nonlinear problem with the same initial data at
time $t=0.1$, the nonlinearity $F\cdot \nabla_v f$ introduces a correction around the time
$t:|\eta-kt|<0.1$.

For our model problems we insert different waves for $f$ and $F[\rho]$,
which corresponds to considering parts of the first Duhamel iteration.
We thus obtain a correction involving a time integral of
\begin{align}
  \label{eq:Duhamelpart1}
 F[\int \epsilon \sin(kx + (\eta-kt)V) \psi(V)dV ] \cdot \nabla_v  \epsilon \cos(x-tv) \psi(v)
\end{align}
and
\begin{align*}
  F[\int \epsilon \cos(x-tV) \psi(V) dV] \nabla_v \epsilon \sin(kx+(\eta-kt)v) \psi(v),
\end{align*}
respectively. In the naming of \cite{bedrossian2016landau, Villani_script} these
are model problems for the ``reaction'' and ``transport'' terms.

We begin by considering the model associated to \eqref{eq:Duhamelpart1}.
Changing to coordinates $(x-tv,v)$, taking a Fourier transform and inserting the
choice of $F=\nabla \Delta^{-1} \rho$, one deduces (see \cite{Villani_script})
that the correction is estimated to be of the size
\begin{align*}
  \epsilon^2 \frac{|\eta|}{|k|^3}\|\mathcal{F}\psi\|_{L^1},
\end{align*}
is localized to the frequency $k\pm 1$ in $x$ and $\eta$ in $v$ and occurs at around the time
$\frac{\eta}{k}$.
This time-localized, large correction is the physically observed \emph{echo}.

Furthermore, in principle it could happen that this correction in turn results in
another correction at the later time $\frac{\eta}{k-1}$, then at the time $\frac{\eta}{k-2}$ and so on.
One thus obtains the upper estimate of the toy model of \cite{Villani_long} by
\begin{align*}
  \prod_{l=1}^k  \epsilon \frac{|\eta|}{|l|^3},
\end{align*}
which is maximized for $k \approx \sqrt[3]{\epsilon|\eta|}$ and suggests an
upper bound on the norm inflation by
\begin{align}
  \label{eq:growthfactor}
  \sup_{k} \prod_{l=1}^k  \epsilon \frac{|\eta|}{|l|^3} \approx \exp(\sqrt[3]{\epsilon |\eta|}).
\end{align}
While this toy model only suggests an upper bound for stability results, in
\cite{bedrossian2016nonlinear} Bedrossian showed that such resonance chain
can indeed occur in the nonlinear problem (with certain constraints on
$\epsilon$ and $\eta$). Moreover, in \cite{zillinger2020landau} for a related model
(linearizing around the \emph{wave solution} $\epsilon \cos(x e_j -tv)\psi(v)$) we showed
that infinitely many such resonance chains can lead to norm blow-up as
$t\rightarrow \infty$ in any regularity class below Gevrey $3$.

However, we emphasize that for resonance to provide a large contribution, two competing conditions have to be satisfied (see
also \cite{zillinger2022stability}):
\begin{itemize}
\item One the one hand, the frequency $l$ should satisfy an \emph{upper bound}
  such that $\epsilon \frac{|\eta|}{|l|^3} \gtrsim 1$ to actually yield a large
  correction.
\item On the other hand, the frequency $l$ needs to satisfy a \emph{lower bound}
  so that the resonant time $\frac{\eta}{l}$ occurs before the time cut-off $T$.
\end{itemize}
Thus, when considering a finite time interval $(0,T)$ the heuristic estimate
\eqref{eq:growthfactor} should be modified to
\begin{align*}
  \prod_{\frac{\eta}{T} \leq l \leq \sqrt[3]{\epsilon \eta}}  \epsilon \frac{|\eta|}{|l|^3} 
  \lesssim 
  \begin{cases}
    \exp(\sqrt[3]{\epsilon |\eta|}) & \text{ if } \frac{|\eta|}{\sqrt[3]{\epsilon |\eta|}} \leq T, \\
    1 & \text{ else}.
  \end{cases}
\end{align*}
The transition between these cases occurs at the cut-off frequency
$\eta_*=\sqrt{\epsilon T^3}$.

In particular, for this toy model and $T=\epsilon^{-N}$ we thus obtain that for
all $|\eta|\leq \eta_*= \epsilon^{-3N+1}$,
\begin{align*}
  \epsilon |\eta| \leq |\eta|^{\frac{3N-2}{3N-1}}.
\end{align*}
The norm inflation in this model can thus be estimated by 
\begin{align*}
   \exp(|\min(\eta,\eta_*)|^{\gamma_N})
\end{align*}
with
\begin{align*}
  \gamma_N &= \frac{1}{3} \frac{3N-2}{3N-1},\\
  \eta_*&= \epsilon^{- \frac{3N-1}{2}}.
\end{align*}
A major aim of Theorem \ref{thm:main} is to establish that such a cut-off effect
holds also for the full nonlinear problem. However, in view of technical
challenges we here allow for different cut-offs and different powers of
$\epsilon$.
As we discuss in Section \ref{sec:E2} and Section \ref{sec:E1} this does not
seem to be just a technical issue. More precisely, while for the frequency
regimes corresponding to this first model it seems possible to reach exactly
these powers, other frequency regimes pose greater challenges, which we
illustrate in the following model.\\

In our second model we consider the contribution by
\begin{align}
  \label{eqLDuhamelpart2}
  \begin{split}
  & \quad F[\int \epsilon \cos(x-tV) \psi(V) dV] \nabla_v \epsilon \sin(kx+(\eta-kt)v) \psi(v) \\
  &\approx \epsilon \tilde{\psi}(t) \cos(x) \epsilon (\eta-kt)  \cos(kx+(\eta-kt)v) \psi(v),
\end{split}
\end{align}
where $\tilde{\psi}$ denotes the Fourier transform of $\psi$ with respect to $v$.
Hence for frequencies $\eta$ much larger than $kt$, this contribution suggests
that $g(t,x,v):=f(t,x-tv,v)$ should behave as a solution of
\begin{align*}
  \dt g \approx \epsilon \tilde{\psi}(t) \cos(x-tv) \p_v g.
\end{align*}
We stress that this is not an effect one would see for $\rho=\int f dv$, since
there $\eta \approx kt$ due to the velocity integral.

Even assuming that $\psi$ is very smooth and hence that $\tilde{\psi}(t)$ is
decaying rapidly, this transport type equation poses great challenges for
estimates since $\p_v$ is an unbounded operator.
In particular, while for analytic regularity this contribution could be easily ``hidden'' in a
loss of constant (that is, consider a weight $\exp(z(t)\langle \eta \rangle)$
with $-\dt z\gg \tilde{\psi}(t)$) any weaker Gevrey class will have to account
for the fact that $z$-derivative a priori only gains fractional regularity in
$v$.
Hence, in Section \ref{sec:E1} we need to exploit that $\p_v$ is an
anti-symmetric operator on $L^2$ and that corresponding commutators in our $L^2$
based Gevrey spaces provide a ``gain'' of one derivative.

The requirements highlighted by both of these models and the fact that the
method of proof not only requires that $\rho$ remains bounded, but rather
\begin{align*}
  \|\rho(t,x)\| \leq \epsilon (1+|t|)^{-\sigma+1}
\end{align*}
for a given power $\sigma>3$ determines our cut-off and $\epsilon$ dependences
in Theorem \ref{thm:main}.

In the remainder of the article we to adapt the method of proof of
\cite{grenier2020landau} to establish non-linear stability estimates and
incorporate these effects.

\section{Generator functions and cut-offs}
\label{sec:generatorfunctions}
In our proof we follow the method of \cite{grenier2020landau} with (major)
modifications to account for the frequency cut-off and the $\epsilon$
dependence.
We briefly discuss the overall strategy of the proof and state the main
estimates as propositions, which we use to establish Theorem
\ref{thm:main}. The proofs of these propositions is given in Sections
\ref{sec:E2} and \ref{sec:E1}.

Considering the structure of the Vlasov-Poisson equations \eqref{eq:VP}, we study solutions
in coordinates moving with free transport and denote
\begin{align*}
  g(t,x,v) = f(t,x+tv,v).
\end{align*}
Then the Vlasov-Poisson equations can be equivalently expressed as a coupled
system for $\rho$ and $g$:
\begin{align}
  \label{eq:VPsys}
  \begin{split}
    \dt g &=- F[\rho](t,x+tv)\cdot (\nabla_v + t \nabla_x)g, \\
    \rho &= \int g(t,x-tv,v) dv, \\
    F&= \nabla W *_x \rho, \\
    \dt \rho &=- \int F(t,x) (\nabla_v + t \nabla_x)g(t,x-tv,v) dv.
  \end{split}
\end{align}
Similarly to usual Cauchy-Kowaleskaya approaches using time-dependent Fourier
multipliers \cite{bedrossian2016landau}, one further introduces two parameter dependent energy functionals,
where at a later stage the parameter will also be chosen depending on time.
\begin{defi}[Generator functions (c.f. \cite{grenier2020landau})]
  \label{defi:gen}
  Let $0<\epsilon\ll 1$, $C>0$, $\sigma > 3$, $\alpha \in (\frac{1}{3}, \frac{1}{2})$ and $\eta_{*}\gg 1$ be given constants.
  We further introduce the short-hand-notation
  \begin{align*}
    \cutoff{k,\eta}=
    \begin{cases} 
      \epsilon^\beta (1+ k^2+\eta^2)^{\frac{1}{6}}& \text{ if } |k|+|\eta| \leq \eta_*, \\
      \epsilon^{\beta'}(1+ k^2+\eta^2)^{\frac{\gamma}{2}} & \text{ if } |k|+ |\eta| \geq 2 \eta_*,
    \end{cases}
  \end{align*}
  with a smooth interpolation in the remaining region.
  We refer to these cases as \emph{below the cut-off} and \emph{above the cut-off}, respectively.
  Here the constant are chosen under the constraints 
  \begin{align}
    \label{eq:betaconstraints}
    \begin{split}
    0 \leq \beta &\leq \frac{1}{3\sigma}, \\
    0 \leq \beta' &\leq \frac{\gamma}{\sigma}, \\
      \frac{1-\alpha}{2} \leq \gamma &\leq \frac{1}{3}, \\
      \epsilon^{\beta'-\beta} &= \eta_*^{\frac{1}{3}-\gamma}, \\
      \eta_*&\geq T^2.
  \end{split}
  \end{align}
  Then for given functions $g$ and $\rho$ and a given parameter $\gamma>0$, for
  any $z\geq 0$ we define the (possibly infinite) energies
  \begin{align}
    \label{eq:defE1}
  E_1(z) = \|\exp(C z \cutoff{k,\eta}^{1/3}) \langle k,\eta\rangle^\sigma (\tilde{g}, \p_\eta \tilde{g})\|_{L^2(\Z\times \R)}^2
\end{align}
and
\begin{align}
  \label{eq:defE2}
  E_{2}(z) = \||k|^{-\alpha} \exp(C z \cutoff{k,kt}^{1/3}) \langle k,kt \rangle^\sigma \tilde{\rho}\|_{l^\infty(\Z)},
\end{align}
where $\tilde{g}(k,\eta)$ and $\tilde{\rho}(k)$ denote the respective Fourier
transforms.
\end{defi}
For the proof of Theorem \ref{thm:main} with
$T=\epsilon^{-N}$ our constants are chosen as
\begin{align*}
  \eta_* &= \epsilon^{-2N}, \\
  \beta &=\frac{1}{3\sigma}, \\
  \beta' &=0, \\
  \gamma &= \frac{1}{3} - \frac{\beta}{2N}. 
\end{align*}
We thus obtain stability in the Gevrey class $\frac{1}{\gamma}>3$ uniformly in
$0<\epsilon\ll 1$.

We remark that in \cite{grenier2020landau} stability is established with an
exponential weight
\begin{align*}
  \exp(z\langle k,\eta \rangle^{1/3+\delta})
\end{align*}
for $\delta>0$.  These new generator functions introduce an improved exponent
$1/3$, the gain of a factor $\epsilon^{\beta}$ and, most importantly, an
improved sub $\frac{1}{3}$ growth past a cut-off.

Our main aim in the following is to show that for the above choices of constants
one may find $z(t)\geq 1$ such that, if initially
\begin{align*}
  \sqrt{E_1} + E_2 \lesssim \epsilon,
\end{align*}
then this estimate remains valid for all
times smaller than $T$.

More precisely, we claim that $E_1$ satisfies the following estimate.
\begin{proposition}
  \label{prop:E1}
  Let $g,\rho$ be a solution of \eqref{eq:VPsys} and let $\eta_*$ and $\beta$ be
  as in Theorem \ref{thm:main}. Let further $E_1(z), E_2(z)$ denote the now
  time-dependent generator functions as in Definition \ref{defi:gen}. Then there
  exist universal constants $C_1,C_2>0$ such that for
  all times $0<t<T$ it holds that
  \begin{align*}
    \dt E_1(z) \leq C_1 E_2(z) E_1(z) + C_2  \epsilon^{-\beta} C^{-1} (1+t) E_2(z) \p_z E_1(z). 
  \end{align*}
\end{proposition}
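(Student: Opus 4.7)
The plan is to differentiate $E_1(z)$ in time at fixed $z$, expand using the Fourier transform of the equation for $g$ in \eqref{eq:VPsys}, and estimate the resulting trilinear form by pairing the pointwise bounds on $\tilde\rho$ and $\tilde g$ that are encoded in the definitions of $E_2$ and $E_1$. Writing $A(k,\eta)=\exp(C z\,\cutoff{k,\eta}^{1/3})\langle k,\eta\rangle^{\sigma}$ for brevity, one has
\begin{align*}
\dt E_1(z)\;=\;2\,\mathrm{Re}\,\langle A\tilde g,\,A\,\dt\tilde g\rangle_{L^2_{k,\eta}} \;+\; 2\,\mathrm{Re}\,\langle A\p_\eta\tilde g,\,A\,\dt\p_\eta\tilde g\rangle_{L^2_{k,\eta}}.
\end{align*}
A direct Fourier computation starting from the equation for $g$ and $\hat F_l = i\tilde\rho_l/l$ gives
\begin{align*}
\dt\tilde g(k,\eta)\;=\;\sum_{l\neq 0}\frac{\tilde\rho_l(t)}{l}\,(\eta-tk)\,\tilde g(k-l,\eta-tl),
\end{align*}
and the identity $\tilde\rho_l = \tilde g(l,tl)$ coming from the change of variables $g=f(t,x+tv,v)$ ensures that the Gevrey weight carried by $\tilde\rho_l$ is sampled at exactly the phase-space difference $(l,tl)$ between $(k,\eta)$ and $(k-l,\eta-tl)$.

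The central step is a multiplier (weight) estimate. Using the sub-additivity
\begin{align*}
\cutoff{k,\eta}^{1/3}\;\leq\;\cutoff{k-l,\eta-tl}^{1/3}+C\,\cutoff{l,tl}^{1/3}
\end{align*}
(proved separately in the two piecewise regimes of Definition \ref{defi:gen}, together with a gluing argument in the transition zone $|k|+|\eta|\sim\eta_*$) and the Peetre-type inequality $\langle k,\eta\rangle\lesssim\langle k-l,\eta-tl\rangle+\langle l,tl\rangle$, one obtains
\begin{align*}
\frac{A(k,\eta)}{A(k-l,\eta-tl)}\;\lesssim\;\exp(C z\,\cutoff{l,tl}^{1/3})\,\langle l,tl\rangle^{\sigma}.
\end{align*}
Combined with the pointwise control $|\tilde\rho_l|\leq|l|^{\alpha}\langle l,tl\rangle^{-\sigma}\exp(-C z\,\cutoff{l,tl}^{1/3})E_2(z)$ built into the definition of $E_2$, both the exponential weight and the polynomial weight on $\tilde\rho_l$ cancel, leaving a pointwise scalar factor $|l|^{\alpha-1}E_2(z)$ attached to the Vlasov multiplier $(\eta-tk)$. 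An outer Cauchy--Schwarz in $(k,\eta)$ together with Young's convolution inequality $\ell^{1}_l * \ell^{2}_{k,\eta}\to\ell^{2}_{k,\eta}$ then reduces matters to bounding $\|(\eta-tk)A(k-l,\eta-tl)\tilde g(k-l,\eta-tl)\|_{L^2_{k,\eta}}$ in terms of $E_1$ and $\p_z E_1$; a small tail $\langle l\rangle^{-\delta}$ is retained to make the $\ell^{1}_l$ sum converge, which is harmless since $\sigma>3$.

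The main obstacle is precisely the leftover factor $(\eta-tk)$, which corresponds to the unbounded $\p_v$-derivative and is the transport phenomenon flagged by the second model of Section \ref{sec:model}. I would decompose
\begin{align*}
\eta-tk\;=\;(\eta-tl)-t(k-l),\qquad |\eta-tk|\;\lesssim\;(1+t)\langle k-l,\eta-tl\rangle,
\end{align*}
which allocates one full derivative between the two outputs. The part of this loss that fits inside the reservoir $\langle\cdot\rangle^{\sigma}$ already present in $A(k-l,\eta-tl)$ produces, after the Cauchy--Schwarz/Young step above, the contribution $C_1 E_2(z)E_1(z)$. The remaining derivative-losing part can only be absorbed by the time-dependent Cauchy--Kowalevskaya reservoir $\p_z E_1$: the weight $\cutoff{k,\eta}^{1/3}$ that $\p_z$ supplies is rewritten as fractional polynomial growth in $\langle k,\eta\rangle$ using $\cutoff{k,\eta}\sim\epsilon^{\beta}\langle k,\eta\rangle^{1/3}$ below the cutoff, and this conversion is exactly what generates the prefactor $\epsilon^{-\beta}C^{-1}$, while the surviving $(1+t)$ comes directly from the decomposition of $(\eta-tk)$.

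The $\p_\eta\tilde g$-half of $E_1$ is handled in parallel after commuting $\p_\eta$ through the convolution: the commutator $[\p_\eta,(\eta-tk)]=1$ generates only a strictly lower-order contribution (no Vlasov multiplier), which is absorbed into the $E_2E_1$ term, and the leading symmetric piece in the $L^2_{k,\eta}$ inner product cancels through the antisymmetry of $\p_v$ alluded to in Section \ref{sec:model}. The most delicate routine verification will be the multiplier sub-additivity in the cutoff transition region $|k|+|\eta|\sim\eta_*$, where the two Gevrey regimes must be glued together without losing the absolute constants that the subsequent bootstrap requires.
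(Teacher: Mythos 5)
Your setup (Fourier expansion of the $g$-equation, the algebra/sub-additivity property of the weight $A$, and the use of the $|l|^{-\alpha}$ factor in $E_2$ to make the $l$-sum converge) matches the paper's. But there is a genuine gap in how you handle the unbounded Vlasov multiplier $(\eta-tk)$, and it is exactly the difficulty the paper flags in its second model of Section \ref{sec:model}. You reduce to bounding $\|(\eta-tk)A\tilde g(k-l,\eta-tl)\|_{L^2}$, write $|\eta-tk|\lesssim(1+t)\langle k-l,\eta-tl\rangle$, and claim the resulting full derivative can be split between the polynomial weight $\langle\cdot\rangle^{\sigma}$ and the reservoir $\p_z E_1$. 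Neither half of this works: $E_1$ carries exactly $\langle k,\eta\rangle^{\sigma}$, so there is no spare polynomial weight to absorb an extra power, and $\p_z$ applied to $e^{Cz\cutoff{k,\eta}^{1/3}}$ only produces a factor comparable to $\cutoff{k,\eta}\sim\epsilon^{\beta}\langle k,\eta\rangle^{1/3}$ (or $\langle k,\eta\rangle^{\gamma}$ above the cut-off) --- a \emph{third} of a derivative, not a full one. This is precisely why the argument is trivial in the analytic class but breaks in any Gevrey class: after your Cauchy--Schwarz step a factor $\langle k,\eta\rangle^{2/3}$ survives uncontrolled, and the claimed bound $C_1E_2E_1+C_2\epsilon^{-\beta}C^{-1}(1+t)E_2\p_zE_1$ does not follow.

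The missing idea is to exploit the antisymmetry of the transport operator \emph{before} estimating, for the $\tilde g$-component itself and not only for the $\p_\eta\tilde g$-component. Testing the equation with $A_{k,\eta}^2\tilde g(k,\eta)$ and splitting $A_{k,\eta}^2=A_{k,\eta}A_{k-l,\eta-lt}+A_{k,\eta}(A_{k,\eta}-A_{k-l,\eta-lt})$, the first (symmetric) piece vanishes identically because $F\cdot(\nabla_v+t\nabla_x)$ is divergence-free; what remains is the commutator
\begin{align*}
(\eta-kt)\,\frac{A_{k,\eta}-A_{k-l,\eta-lt}}{A_{l,lt}A_{k-l,\eta-lt}},
\end{align*}
in which the difference of weights gains a factor $\langle l,lt\rangle/\langle k,\eta\rangle$ (mean value theorem applied separately to the polynomial and exponential parts of $A$ when $\langle l,lt\rangle\leq\tfrac12\langle k,\eta\rangle$, and a crude bound otherwise). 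This gain of one full derivative cancels $|\eta-kt|\leq\langle t\rangle\langle k,\eta\rangle$, and the only surviving loss is the fractional factor $C\cutoff{\cdot}$ coming from differentiating the exponential --- which is exactly $C\,\epsilon^{\,0}\cdot$(what $\epsilon^{-\beta}C^{-1}\p_z$ supplies), producing the stated $\epsilon^{-\beta}C^{-1}(1+t)E_2\p_zE_1$ term. Without this symmetrization-plus-commutator step your scheme cannot close.
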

This result is analogous to \cite[Proposition 4.1]{grenier2020landau} but
considers our modified generator functions and hence include the improved
$\epsilon$ dependence and frequency cut-off.
The proof of this proposition is given in Section \ref{sec:E2}.
Assuming these results for the moment, we note that if we can show that
\begin{align}
  \label{eq:assumpE2}
   C_2 \epsilon^{-\beta} C^{-1} (1+t) E_2(z) \leq \frac{1}{1+t}, 
\end{align}
then choosing
\begin{align*}
  z(t)= 2 \log(T) -\log(t),
\end{align*}
it follows that
\begin{align}
  \label{eq:IntIneq}
  \begin{split}
  \frac{d}{dt} E_1(z(t)) &\leq  C_1 E_2(z(t)) E_1(z(t)) \\
  \leadsto E_1(z(t)) \leq E_1(z(t))|_{t=0} \exp(\int_0^t E_2 ds) &\lesssim E_1(z(t))|_{t=0}.
\end{split}
\end{align}
Thus, the main challenge to the proof of Theorem \ref{thm:main} is given by
establishing a suitable decay bound on $E_2(z(t))$ with this choice of $z(t)$.
\begin{proposition}
  \label{prop:E2}
Let $E_1,E_2$ be as in Proposition \ref{prop:E1}. Furthermore, choose $z$ to be
time-dependent as
\begin{align*}
  z(t)= 2 \log(T) -\log(t)
\end{align*}
and suppose that on a time interval $[0,t_{*}]\subset [0.T]$ it holds that 
\begin{align}
  \label{eq:assumpE1}
  E_1(z(t))\leq 16 \epsilon^2.
\end{align}
Then on that same time interval we have the estimate 
\begin{align*}
  \begin{split}
  (1+t)^{\sigma-1} E_2(z(t)) &\leq \epsilon (1+t)^{\sigma-1} \\
  & \quad + c \sup_{0\leq s \leq t} (1+s)^{\sigma-1} E_2(z(s)),
\end{split}
\end{align*}
where $c>0$ only depends on the constant $C$ in Definition \ref{defi:gen} and
$c<1/2$ if $C$ is sufficiently large.
\end{proposition}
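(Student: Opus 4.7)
My plan is to derive a Volterra-type integral equation for $\tilde\rho(t,k)$, apply the $E_2$-weight, and reduce the proposition to a bilinear kernel estimate closed using the $E_1$ bootstrap assumption \eqref{eq:assumpE1} and the subadditivity of the weight $\cutoff{\cdot,\cdot}^{1/3}$. I first use $\tilde\rho(t,k)=\tilde g(t,k,kt)$, an immediate consequence of $\rho(t,x)=\int g(t,x-tv,v)dv$, and integrate the evolution of $\tilde g(t,k,kt)$ induced by \eqref{eq:VPsys}. The Fourier representation of the nonlinearity yields the schematic Duhamel identity
\begin{align*}
\tilde\rho(t,k)=\tilde g_0(k,kt)+\int_0^t\sum_{l\neq 0}K(k,l,t,s)\,\tilde\rho(s,l)\,\tilde g(s,k-l,kt-ls)\,ds,
\end{align*}
with kernel satisfying $|K(k,l,t,s)|\lesssim l^{-2}(|k||k-l|(t-s)+|kt-ls|)$, as in \cite{grenier2020landau}. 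Applying the weight $|k|^{-\alpha}\exp(Cz(t)\cutoff{k,kt}^{1/3})\langle k,kt\rangle^\sigma$, using the data assumption of Theorem~\ref{thm:main} and the bound $z(t)\leq 2\log T$, the linear part already contributes the term $\epsilon(1+t)^{\sigma-1}$ claimed in the proposition.

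For the bilinear term I trade the weight at $(k,kt)$ against the weights available at $(l,ls)$ and $(k-l,kt-ls)$ via the elementary inequality
\begin{align*}
\cutoff{k,kt}^{1/3}\leq \cutoff{l,ls}^{1/3}+\cutoff{k-l,kt-ls}^{1/3}+\mathrm{err}(k,l,t,s),
\end{align*}
valid below the cut-off by concavity of $x\mapsto x^{1/3}$ and extended above the cut-off via the matching condition $\epsilon^{\beta'-\beta}=\eta_*^{1/3-\gamma}$ of \eqref{eq:betaconstraints}. The factor $\tilde g(s,k-l,kt-ls)$ is then controlled pointwise by Sobolev embedding $H^1(\R_\eta)\hookrightarrow L^\infty$ applied to \eqref{eq:defE1}, yielding
\begin{align*}
|\tilde g(s,k-l,kt-ls)|\lesssim \frac{\sqrt{E_1(z(s))}}{\langle k-l,kt-ls\rangle^\sigma\exp(Cz(s)\cutoff{k-l,kt-ls}^{1/3})}\leq \frac{4\epsilon\,\exp(-Cz(s)\cutoff{k-l,kt-ls}^{1/3})}{\langle k-l,kt-ls\rangle^\sigma},
\end{align*}
by \eqref{eq:assumpE1}, while $\tilde\rho(s,l)$ contributes a factor $|l|^\alpha E_2(z(s))/(\langle l,ls\rangle^\sigma\exp(Cz(s)\cutoff{l,ls}^{1/3}))$. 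Bounding $E_2(z(s))\leq (1+s)^{1-\sigma}\sup_{s\leq t}(1+s)^{\sigma-1}E_2(z(s))$ and absorbing the exponential weights reduces the estimate to a purely kinematic bound of the form
\begin{align*}
\epsilon(1+t)^{1-\sigma}\langle k,kt\rangle^\sigma|k|^{-\alpha}\int_0^t\sum_{l\neq 0}\frac{|l|^{\alpha-2}(|k||k-l|(t-s)+|kt-ls|)}{\langle l,ls\rangle^\sigma\langle k-l,kt-ls\rangle^\sigma(1+s)^{1-\sigma}}\,ds\leq c,
\end{align*}
where $c<1/2$ is the constant in the proposition.

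The main obstacle is the plasma echo: for $l\neq k$ and $s\approx kt/l$ the factor $\langle k-l,kt-ls\rangle$ is of size one on a short window, so the naive integrand is only borderline integrable and loses an algebraic factor in $|kt|$ together with a logarithm in $T$ when summed over $l$. This is precisely where the exponential weight is decisive: on the resonant set $|kt-ls|\lesssim 1$ the gap $\cutoff{l,ls}^{1/3}+\cutoff{k-l,kt-ls}^{1/3}-\cutoff{k,kt}^{1/3}$ is strictly positive and comparable to $\cutoff{l,ls}^{1/3}$, producing a gain $\exp(-c_0 Cz(s)\cutoff{l,ls}^{1/3})$ that beats both the echo denominator and the sum in $l$ once $C$ is chosen large. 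Above the cut-off the constraints $\eta_*\geq T^2$ and $\gamma=\frac{1}{3}-\frac{\beta}{2N}$ from Theorem~\ref{thm:main} restrict the admissible resonance chain so that only finitely many terms contribute, while below the cut-off the prefactor $\epsilon^\beta$ inside $\cutoff{\cdot,\cdot}$ together with $\beta\leq 1/(3\sigma)$ converts the exponential gain into the $(1+t)^{\sigma-1}$-budget encoded in the statement. Assembling these estimates and taking the $\ell^\infty$-supremum in $k$ yields the claimed inequality with $c<1/2$ for $C$ sufficiently large.
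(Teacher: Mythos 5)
Your setup (the Duhamel/Volterra identity for $\tilde\rho(t,k)=\tilde g(t,k,kt)$, weighting by $|k|^{-\alpha}e^{Cz(t)\cutoff{k,kt}^{1/3}}\langle k,kt\rangle^{\sigma}$, Sobolev embedding for $\tilde g$ via \eqref{eq:assumpE1}, and reduction to a kernel bound) coincides with the paper's. The gap is in the mechanism you invoke to kill the echoes. You claim that on the resonant set $|kt-ls|\lesssim 1$ the triangle-inequality defect
\begin{align*}
\cutoff{l,ls}^{1/3}+\cutoff{k-l,kt-ls}^{1/3}-\cutoff{k,kt}^{1/3}
\end{align*}
is ``comparable to $\cutoff{l,ls}^{1/3}$.'' It is not: at resonance $(l,ls)$ is comparable in magnitude to $(k,kt)$, so the defect is comparable to the \emph{small} piece $\cutoff{k-l,kt-ls}^{1/3}$, which for the dominant echoes ($|k-l|=1$, $|kt-ls|=O(1)$) is $O(\epsilon^{\beta})$. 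Since $z(s)\leq 2\log T=2N\log(1/\epsilon)$, the resulting factor $\exp(-Cz(s)\,O(\epsilon^{\beta}))\to 1$, so this gain is negligible against the echo amplification $\epsilon\,kt/l^{3}$, which can be as large as $\epsilon^{1-2N}$. This is precisely the reason Landau damping below analytic regularity is delicate, and your argument as written does not close.

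The paper's actual gain in the reaction case comes from the \emph{time dependence} of $z$: since $z(t)=2\log T-\log t$ is decreasing, the factor $e^{C(z(t)-z(s))\cutoff{k,kt}}$ is exponentially small, with $|z(t)-z(s)|\geq\frac{t-s}{t}\approx\frac{|k-l|}{k}$ at the echo time $s\approx kt/l$; the estimate \eqref{eq:C13} then converts this into the decay $C^{-3}\epsilon^{-3\beta}\frac{k^{3}}{|k-l|^{3}}\langle k,kt\rangle^{-1}$, whose $(kt)^{-1}$ exactly cancels the echo factor $\epsilon\,kt/k^{3}$ (and above the cut-off one instead uses $\eta_{*}\geq T^{2}$, i.e.\ $k\geq T$, so that $\epsilon kT/k^{3}\ll1$ with no exponential needed). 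This Cauchy--Kovalevskaya-type mechanism is also what forces the restriction $t\leq T$ and the $\log T$ in the weight, which your mechanism cannot reproduce. Finally, your ``kinematic bound'' silently subsumes the transport regime ($|l|,|ls|$ small relative to $(k,kt)$), where the paper needs a genuinely separate argument — there $\langle l,ls\rangle^{-\sigma}$ cannot absorb powers of $k$ and $t$, and one must spend the exponential with exponent $3(1-\alpha)$ (resp.\ $(1-\alpha)/\gamma$), which is the origin of the constraints $\alpha>1/3$ and $\alpha>1-2\gamma$; this case is absent from your proposal.
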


This estimate is similar to \cite[Lemma 4.4]{grenier2020landau} with the
following key differences:
\begin{itemize} 
\item The frequency transition $\cutoff{\cdot, \cdot}$ and the associated cut-off constitute a main new effect.
\item We here reach $1/3$ in the exponent. However, in turn the estimates only
  remain valid while $z(t)$ remains bounded below, which requires restricting to
  times smaller than $T$.
\item An analogous result can be established for $T\leq \infty$ with $\dt
  z(t)\approx (1+t)^{-1-\delta}$ and exponents $\cutoff{k,\eta}^{1/3+\delta}$
  (and smaller $\beta$) instead.  
\item In comparison to \cite{grenier2020landau}, the additional $\epsilon$
  dependence of $E_1,E_2$ implies a loss of powers of $\epsilon$ in some
  estimates. In particular, choosing $\beta, \beta'$ maximally under the
  constraints \eqref{eq:betaconstraints}, choosing $\epsilon$ small does not
  yield any further improvements. Hence, we need to carefully estimate all terms
  and establish a bound by $c<1$ uniformly in $\epsilon, t, \eta_{*}(\epsilon, N)$.
\end{itemize}

Given these result, we can establish Theorem \ref{thm:main}.
\begin{proof}[Proof of Theorem \ref{thm:main}]
  By assumption it holds that at time $0$,
  \begin{align*}
    E_1|_{t=0} &\leq \epsilon^2, \\
    E_2|_{t=0} &\leq \epsilon.
  \end{align*}
  In particular, at that time the estimates \eqref{eq:assumpE1} and
  \eqref{eq:assumpE2} are satisfied with improved constants.
  Thus, by continuity these estimates remain true at least for some positive
  time.
  We hence define $0<t_*\leq T$  as the maximal time such that 
  \begin{align*}
    E_1 &\leq 16 \epsilon^2, \\
    E_2 &\leq 4 \epsilon (1+t)^{-\sigma+1},
  \end{align*}
  holds for all times $0\leq t \leq t_*$.
  
  If $t_*=T$, then Theorem \ref{thm:main} immediately follows from the results
  of Propositions \ref{prop:E2} and the discussion following Proposition \ref{prop:E1}.
  Thus suppose for the sake of contradiction, that the maximal time $t_*$ is
  strictly smaller than $T$.
  Then by the estimates of Proposition \ref{prop:E2} at the time $t_{*}$ it
  holds that
  \begin{align*}
    E_2(t) \leq \frac{1}{1-c} \epsilon (1+t)^{-\sigma+1} \leq 8  \epsilon (1+t)^{-\sigma+1},
  \end{align*}
  where we used that $c$ is sufficiently small. Thus equality in
  \eqref{eq:assumpE2} is not attained.
  
  Similarly, by the results of Proposition \ref{prop:E1} and \eqref{eq:IntIneq}
  it follows that 
  \begin{align*}
     E_1 &\leq \epsilon^2 (1+ \int_0^t c (1+s)^{-\sigma+2} ds)   \leq 8 \epsilon^2.
  \end{align*}
  Hence, equality is also not attained at time $t_*$ for \eqref{eq:assumpE1}.
  Thus, by by continuity the estimates \eqref{eq:assumpE2}, \eqref{eq:assumpE1}
  remain valid at least for a small additional time past $t_*$.
  However, this contradicts the maximality of $t_*$ and thus it is not possible
  that $t_*<T$, which concludes the proof.
\end{proof}
It thus remains to establish the bounds for $E_1$ and $E_2$ as claimed in
Propositions \ref{prop:E1} and \ref{prop:E2}.
We emphasize that (except for the cut-off) the estimate for $E_1$ follows the
same abstract and rather rough argument as in \cite{grenier2020landau}, which
requires $\sigma>3$.
In contrast the proof of the estimates of $E_2$ could in principle be modified
to allow $\sigma\geq 1$ and to match the bounds of the echo chains discussed in Section~\ref{sec:model}.
We expect that using methods closer to the ones of \cite{bedrossian2016landau}
it should be possible to obtain the case $\sigma=1$ also for $E_2$, however the method of
\cite{grenier2020landau} seems to require that $\sigma>3$.

\section{Control of $\rho$}
\label{sec:E2}
In this section we assume that $E_1(z(t))$ remains small as stated in
\eqref{eq:assumpE1}:
\begin{align*}
  E_1(z(t))\leq 16 \epsilon^2,
\end{align*}
and estimate the possible norm growth of $\rho(t)$.

We may thus consider $g(t,x,v)$ as given in \eqref{eq:VPsys} and study the
evolution equation for $\rho$:
\begin{align*}
  \dt \rho &=- \int F(t,x) (\nabla_v + t \nabla_x)g(t,x-tv,v) dv.
\end{align*}
Here we note that $F=F[\rho]$ is given by a Fourier multiplier
\begin{align*}
  \mathcal{F} F (t,l) = \frac{1}{il} \tilde{\rho}(t,l)
\end{align*}
and that multiplication turns into a (discrete) convolution under a Fourier transform.
We may thus equivalently express our equation in integral form as 
\begin{align}
  \label{eq:intrho}
  \tilde{\rho}(t,k) = \tilde{g}(0,k,kt) + \sum_{l\neq 0}\int_0^t \frac{k(t-s)}{l}\tilde{\rho}(s,l) \tilde{g}(s,k-l,kt-ls) ds.
\end{align}
As discussed in the echo model of Section \ref{sec:model}, here our assumed
control of $g$ is rather weak if $k-l$ is small and $kt-ls\approx 0$.
Indeed, at a heuristic level we may only expect estimates of the form
\begin{align*}
  |\tilde{g}(s,k-l,kt-ls)| \lesssim \epsilon (1+|kt-ls|^2)^{-1},
\end{align*}
whose time integral is estimated by $\frac{\epsilon}{l}$.
Furthermore, inserting the assumption that $kt-ls\approx 0$ and denoting
$\eta:=kt$, the first factor can be estimated by
\begin{align*}
  \frac{k(t-s)}{l}\approx \frac{(l-k)s}{l} \approx (l-k) \frac{\eta}{l^2}.
\end{align*}
Hence, we again arrive at the estimate of the growth factor by $\epsilon
\frac{\eta}{l^3}$ (for $l-k=1$) as in \eqref{eq:growthfactor}. In particular,
using that $l\approx k = \frac{\eta}{t}$ this factor is much smaller than $1$ if
$\eta$ is very large, which will allow us to introduce a cutoff in $\eta$.

For ease of reference, we also note some general techniques to be used
throughout the proof:
\begin{itemize}
\item The cut-off weight of Definition \ref{defi:gen}
  \begin{align*}
    \cutoff{k,\eta}=
    \begin{cases}
      \epsilon^{\beta} \langle k,\eta \rangle^{1/3}, \\
      \epsilon^{\beta'} \langle k,\eta \rangle^{\gamma}
    \end{cases}
  \end{align*}
  satisfies a triangle inequality
  \begin{align*}
    \cutoff{k,\eta} \leq \cutoff{k-l,\eta-\xi} + \cutoff{l,\xi}. 
  \end{align*}
  Moreover, we emphasize that due to the exponents $0<\gamma\leq 1/3$  the right-hand-side in
  general is much larger than the left-hand-side unless $(l,\xi)$ (or $(k-l,
  \eta-\xi)$) is small compared to $(k,\eta)$.
\item Let $C_1>0$, then for $k, \eta$ it holds that
  \begin{align}
  \label{eq:C13}
  \exp(-C_1 \cutoff{k,\eta}) \leq
    \begin{cases}
      C_{1}^{-3}\epsilon^{-3\beta} \langle k,\eta \rangle^{-1} & \text { below the cut-off},\\
      C_1^{-\frac{1}{\gamma}} \epsilon^{-3\beta'} \langle k,\eta \rangle^{-1} & \text { above the cut-off}.
    \end{cases}
  \end{align}
  We stress that this limits the choice of $\beta$ in Definition
  \ref{defi:gen} to $\beta\leq \frac{1}{3}, \beta'\leq \gamma$.
  Furthermore, it deteriorates for $C_1>0$ small.
\item Similarly, in view of the time decay with $t^{-\sigma+1}$ as required in the estimate
  \eqref{eq:assumpE2}, for some estimates we require an improved bound of the form
  \begin{align}
    \label{eq:Cb}
     \exp(-C_1 \cutoff{k,\eta}) \leq
    \begin{cases}
      C_{1}^{-3\sigma}\epsilon^{-3\sigma\beta} \langle k,\eta \rangle^{-\sigma} & \text { below the cut-off},\\
      C_1^{-\frac{\sigma}{\gamma}} \epsilon^{-3\sigma \beta'} \langle k,\eta \rangle^{-\sigma} & \text { above the cut-off}.
    \end{cases}
  \end{align}
uniformly in $t$. This imposes the stronger condition $\beta\leq
\frac{1}{3\sigma}$.
\end{itemize}

With these preparations, we turn to our estimate of $E_2$.
\begin{proof}[Proof of Proposition \ref{prop:E2}]
Recalling the Definition \ref{defi:gen} of $E_2$, we consider a weighted version
of the integral equation \eqref{eq:intrho}:
\begin{align*}
  & \quad e^{Cz(t) \epsilon^{\beta}\cutoff{k,kt}} \langle k,kt\rangle^{\sigma} |k|^{-\alpha}\tilde{\rho}(t,k) \\
  &= e^{Cz(t) \epsilon^{\beta}\cutoff{k,kt}} \langle k,kt\rangle^{\sigma} |k|^{-\alpha}\tilde{g}(0,k,\eta+kt) \\
  & \quad +\sum_{l\neq 0}\int_0^t e^{Cz(t) \epsilon^{\beta}\cutoff{k,kt}} \langle k,kt\rangle^{\sigma} |k|^{-\alpha} \frac{k(t-s)}{l}\tilde{\rho}(s,l) \tilde{g}(s,k-l,kt-ls) ds.
\end{align*}
The first summand here is rapidly decaying due to the regularity of the initial data, in
particular, it holds that
\begin{align*}
  \sup_{k}  e^{Cz(t) \epsilon^{\beta}\cutoff{k,kt}} \langle k,kt\rangle ^{\sigma} |k|^{-\alpha}\tilde{g}(0,k,\eta+kt) \leq \epsilon (1+t)^{-\sigma}.
\end{align*}

For the integral term we insert the weights of Definition \ref{defi:gen} for both
$\tilde{\rho}$ and $\tilde{g}$ thus arrive at
\begin{align*}
  &\sum_{l\neq 0}\int_0^t e^{C\epsilon^{\beta} (z(t) \cutoff{k,kt}^{1/3}- z(s) \cutoff{k-l,kt-ls}^{1/3} -z(s)\cutoff{l,ls}^{1/3}}\\
  & \quad \langle k,kt\rangle^{\sigma}\langle k-l,kt-ls\rangle^{-\sigma} \langle l,ls\rangle ^{-\sigma}   |k|^{-\alpha}|l|^{\alpha} \frac{k(t-s)}{l}\\
&\left(A(s,l,ls)|l|^{-\alpha} \tilde{\rho}(s,l)  \right) \left(A(s,k-l,kt-ls) \tilde{g}(s,k-l,kt-ls)  \right) ds,
\end{align*}
where we denote the weights as $A$ for brevity.
Using the Sobolev embedding $H^1\subset L^\infty$ and \eqref{eq:assumpE1} to estimate
\begin{align*}
  \|\left(A(s,k-l,kt-ls) \tilde{g}(s,k-l,kt-ls)  \right)\|_{\ell^\infty} \leq \sqrt{E_1(z(s))} \lesssim \epsilon
\end{align*}
and that by definition 
\begin{align*}
  \|\left(A(s,l,ls)|l|^{-\alpha} \tilde{\rho}(s,l)  \right)\|_{\ell^\infty} = E_1(z(s)),
\end{align*}
we thus arrive at an integral inequality of the form
\begin{align*}
  E_1(z(t)) \leq \epsilon (1+t)^{-\sigma} + \sup_k \sum_{l\neq 0}\int_0^t  \epsilon C_{k,l}(t,s) E_1(z(s)) ds.
\end{align*}
Here we use a similar notation as \cite{grenier2020landau} and defined
\begin{align}
  \label{eq:Ckl}
  \begin{split}
  \epsilon C_{k,l}(t,s)&=  e^{C (z(t) -z(s)) \cutoff{k,kt}} e^{C z(s)(\cutoff{k,kt}-\cutoff{k-l,kt-ls} -\cutoff{l,ls})} \\
                       & \quad \langle k,kt\rangle^{\sigma}\langle k-l,kt-ls\rangle ^{-\sigma} \langle l,ls\rangle^{-\sigma} |k|^{-\alpha}|l|^{\alpha}\\
  & \quad \epsilon \frac{k(t-s)}{l}.
\end{split}
\end{align}
In view of the time decay encoded in 
\eqref{eq:assumpE1}, we define (with the same notation as
\cite{grenier2020landau})
\begin{align*}
  \zeta(t)&= \sup_{0\leq \tau \leq t} E_1(z(\tau))\langle s \rangle^{\sigma-1}, \\
  \langle s \rangle &:= \sqrt{1+s^2}.
\end{align*}
Inserting this definition, we obtain the following integral inequality:
\begin{align}
  \zeta(t) \leq \epsilon  + \sup_{k}\int_0^t\sum_{l\neq 0} \epsilon C_{k,l}(t,s)\langle s \rangle^{-\sigma+1} \langle t \rangle^{\sigma-1} ds \zeta(t).
\end{align}
It thus suffices to show that
\begin{align}
  \label{eq:E2est}
  \sup_{k}\int_0^t\sum_{l\neq 0} \epsilon C_{k,l}(t,s)\langle s \rangle^{-\sigma+1} \langle t \rangle^{\sigma-1} ds \leq \frac{1}{2}.
\end{align}
Compared to \cite{grenier2020landau} we here additionally make use of the power $\epsilon^1$.
We further highlight the strong control required for small times $s\ll t$ when
$\sigma>1$. In that region we need to rely on the exponential factors in
$C_{k,l}(t,s)$ to provide decay, which we noted as \eqref{eq:Cb}.

In the following we distinguish multiple cases for the estimates in
\eqref{eq:E2est} and possibly restrict to sub-intervals of $(0,t)$ to study
\begin{align}
  \label{eq:E2integral}
  \int_I \epsilon C_{k,l}(t,s)\langle s \rangle^{-\sigma+1} \langle t \rangle^{\sigma-1} ds
\end{align}
for fixed $k$ and $l\neq 0$.

As a first instructive case we study the setting $l=k$, where we use different
arguments if $|kt-ls|\geq \frac{kt}{2}$ and when $|kt-ls|\leq\frac{kt}{2}$.

\underline{The diagonal case $l=k$:}\\
We note that in this special case $l=k$ (recall that $l\neq 0$) our estimate
\eqref{eq:E2integral} reduces to
\begin{align*}
  \int_I \epsilon &e^{C (z(t) -z(s)) \cutoff{k,kt}} e^{C z(s)(\cutoff{k,kt}-\cutoff{0,k(t-s)} -\cutoff{k,ks})} \\
                       & \quad \langle k,kt\rangle^{\sigma}\langle 0,k(t-s)\rangle ^{-\sigma} \langle k,ks\rangle^{-\sigma} (t-s) \langle t \rangle^{\sigma-1}\langle s \rangle^{-\sigma+1} ds.
\end{align*}
We split this integral into the regions
\begin{align*}
  |kt-ks|\geq \frac{kt}{2} &\Leftrightarrow s \leq \frac{t}{2}, \\
  |kt-ks|\leq \frac{kt}{2} &\Leftrightarrow s \geq \frac{t}{2}.
\end{align*}
Both regimes exhibit similar behavior as the model problems of Section
\ref{sec:model} and hence require different arguments.
\\

We begin our discussion with the region $s\geq\frac{t}{2}$, where it holds that
\begin{align*}
  \langle k,kt\rangle^{\sigma}\langle 0,k(t-s)\rangle ^{-\sigma} \langle k,ks\rangle^{-\sigma} (t-s) \langle t \rangle^{\sigma-1}\langle s \rangle^{-\sigma+1} \leq 2^{2\sigma-1} \langle t-s\rangle^{-\sigma+1} k^{-\sigma}.
\end{align*}
If $\sigma>2$ this integral is bounded uniformly in $t$ and smaller than a
constant times $\frac{\epsilon}{k^\sigma}$.
We emphasize that in this region we did not require any bounds involving $\cutoff{k,kt}$.
\\

We next turn to the case $s\leq \frac{t}{2}$, where we emphasize that 
\begin{align*}
  & \quad \epsilon \langle k,kt\rangle^{\sigma}\langle 0,k(t-s)\rangle ^{-\sigma} \langle k,ks\rangle^{-\sigma} (t-s) \langle t \rangle^{\sigma-1}\langle s \rangle^{-\sigma+1} \\
  &\leq \epsilon \langle t \rangle^{\sigma} \langle k,ks\rangle^{-\sigma}\langle s \rangle^{-\sigma+1}
\end{align*}
is integrable but the value of the integral might be of size 
\begin{align}
  \label{eq:kkcase}
  \epsilon \langle t \rangle^{\sigma} \langle k \rangle^{-\sigma}.
\end{align}
Since $t\leq T$, this bound becomes small if $k: |k|\gg T$ is sufficiently
large, which hence allows for a cutoff at $\eta_*\geq T^2$.
However, for smaller values of $k$ this integral might be very large and we thus
need to rely on our exponential factor to improve our estimate.

For this purpose we note that for $s\leq \frac{t}{2}$, by the intermediate
value theorem and monotonicity of $\dt z$ it holds that
\begin{align*}
  |z(t)-z(s)| = |\p_tz(\overline{t})| (t-s) \geq \frac{t-s}{t} \geq \frac{1}{2}.
\end{align*}
We may thus use the exponential estimate \eqref{eq:Cb} to conclude that
\begin{align*}
  e^{C (z(t) -z(s)) \cutoff{k,kt}} \lesssim
  \begin{cases}
    C^{-3\sigma} \epsilon^{-3\sigma\beta} \langle k,kt \rangle^{-\sigma},& \text{ below the cutoff}, \\
    C^{-\sigma/\gamma} \epsilon^{-\sigma \beta'/\gamma} \langle k,kt \rangle^{-\sigma}, & \text{ above the cutoff}.
  \end{cases}
\end{align*}
Combining this with \eqref{eq:kkcase} and the requirements \eqref{eq:betaconstraints}, we hence obtain a bound uniformly in $t$
and $\epsilon$, which further decays in $k$.
\\

In the following we discuss the various cases $0\neq l \neq k$, where in view of
Section \ref{sec:model} we expect to encounter contributions due to resonances.
Furthermore, as seen already in the simple example of the diagonal case, the
weight $ \langle t \rangle^{\sigma-1}\langle s \rangle^{-\sigma+1}$ in
\eqref{eq:E2est} poses challenges to deriving good estimates in terms of powers
of $\epsilon$.\\

\underline{The reaction case: $\left\{s: |kt-ls|\leq \frac{kt}{2}\right\}$.}\\
We note that in this case by the triangle inequality it holds that $|ls|\geq \frac{kt}{2}$ and
thus \eqref{eq:E2integral} reduces to
\begin{align*}
  \int_{I} e^{C(z(t)-z(s))\cutoff{k,kt}}  l \langle k-l,kt-ls \rangle^{-\sigma} |k|^{-\alpha}|l|^{\alpha} \epsilon \frac{k(t-s)}{l^2} \langle t \rangle^{\sigma-1} \langle s \rangle^{-\sigma+1} ds.
\end{align*}
We first consider the sub-case $s\leq \frac{t}{2}$.
Here we note that
\begin{align*}
  |z(t)-z(s)| &\geq \frac{1}{2}, \\
  t-s &\leq t, \\
  |l| &\geq k.
\end{align*}
Therefore, a rough estimate is given by
\begin{align*}
    e^{-\frac{C}{2}\cutoff{k,kt}} \epsilon \langle t \rangle^{\sigma} \int_{I} l \langle k-l,kt-ls \rangle^{-\sigma}  \langle s \rangle^{-\sigma+1} ds.
\end{align*}
The integral here is bounded by $\langle k-l \rangle^{-\sigma+1}$, which is
summable in $l$ provided $\sigma>2$.
For the prefactor, we use \eqref{eq:Cb} and \eqref{eq:betaconstraints} to
control
\begin{align*}
   e^{-\frac{C}{2}\cutoff{k,kt}} \epsilon \langle t \rangle^{\sigma} \lesssim C^{-3\sigma}.
\end{align*}
Therefore, we indeed obtain a bound by a small constant provided $C$ is
sufficiently large.

We next consider the sub-case $\frac{t}{2}\leq s \leq t$.
Here we note that
\begin{align*}
  |z(t)-z(s)|&\geq \frac{t-s}{t}, \\
  \langle t \rangle^{\sigma-1} \langle s \rangle^{-\sigma+1} &\leq 2^{\sigma-1}, \\
  \frac{k}{2}\leq l &\leq 2k.
\end{align*}
Therefore, may equivalently consider
\begin{align*}
  \int_{I} e^{-C \frac{t-s}{t} \cutoff{k,kt}}  \langle k-l, kt-ls \rangle^{-\sigma} \epsilon (t-s) |k|^{1-\alpha} |l|^{\alpha-1}.
\end{align*}
If $|kt-ls|\geq \frac{t}{2}\geq \frac{t-s}{2}$, we may simply estimate
$|k|^{1-\alpha} |l|^{\alpha-1}\leq 2^{\alpha-1}$ and
\begin{align*}
  \langle k-l, kt-ls \rangle^{-\sigma} \epsilon (t-s) \leq 2 \epsilon  \langle k-l, kt-ls \rangle^{-\sigma+1},
\end{align*}
which is integrable. Furthermore, the value of the integral is bounded by
$\epsilon\langle k-l \rangle^{-\sigma+2}$ and hence summable and small provided
$\sigma>3$.
It thus remains to study the sub-case, where $|kt-ls|\leq \frac{t}{2}$, $s\geq
\frac{t}{2}$, which implies that
\begin{align*}
  |t-s| \approx \frac{|k-l|}{k}
\end{align*}
We thus need to control
\begin{align}
  \label{eq:key13}
  e^{-C \frac{|k-l|}{k} \cutoff{k,kt}} (k-l)  \epsilon \frac{kt}{k^3}  l \langle k-l, kt-ls \rangle^{-\sigma+1}. 
\end{align}
Applying the estimate \eqref{eq:C13} \emph{below the cut-off}, as in \cite[Lemma
4.4]{grenier2020landau} we hence obtain a bound by
\begin{align*}
   \epsilon^{1-3\beta} |k-l|^{-2} k^3 \frac{k-l}{k^3} \langle k-l, kt-ls \rangle^{-\sigma+1}.
\end{align*}
The powers of $k$ exactly cancel and we obtain the desired bound.

We emphasize that \emph{above the cut-off} this argument breaks, since we would
obtain a bound by $k^{\frac{1}{\gamma}-3}$ which grows unbounded as
$k\rightarrow \infty$. However, this problem does not occur in the case of a
finite time interval, since one then \emph{cannot} independently let $kt$ and
$k^3$ tend to infinity.
More precisely, since $0 \leq t \leq T$ we may very roughly control
\eqref{eq:key13} by
\begin{align*}
  \epsilon \frac{kT}{k^3} \langle k-l, kt-ls \rangle^{-\sigma+2},
\end{align*}
\emph{irrespective of the definition of} $\cutoff{k,kt}$!
In particular, if $k$ is sufficiently large such that
\begin{align*}
  \epsilon \frac{kT}{k^3} \ll 1
\end{align*}
no further argument is required.
Since our cut-off is defined in terms of
\begin{align*}
  \langle k,kt \rangle \geq \eta_*,
\end{align*}
this can for instance be achieved by choosing $\eta_*\geq T^2$, so that $k\geq T$.

As discussed in Section \ref{sec:model}, the choice of $\eta_*$ can surely be
further optimized, but for the purposes of this article a rough bound is sufficient.

\underline{The transport case: $\left\{s: \frac{kt}{2}\leq |kt-ls|\right\}$.}\\
Similarly to the second model of Section \ref{sec:model}, in this case $ls$ and $l$ might be very small and hence cannot compensate for
powers of $k$ and $t$. For this reason we crucially rely on the decay of the
exponential factor. In particular, in contrast to the $l=k$ case, we here need
to control positive powers of $k$ even if $kt$ is much larger than the cut-off.

Inserting our assumptions we need to estimate
\begin{align*}
        \int_I \epsilon &e^{-C\epsilon^{\beta} (z(t)-z(s)) \cutoff{k,kt}^{1/3}}  \\
   &\quad 2^\sigma  l  \langle l,ls\rangle ^{-\sigma} |k|^{-\alpha}|l|^{\alpha}\\
  & \quad  \frac{k(t-s)}{l^2}  \langle t \rangle^{\sigma-1} \langle s \rangle^{-\sigma+1} ds.
\end{align*}
We argue as in \cite{grenier2020landau} (see Lemma 4.4, case
1 and, in particular, $(4.24)$ there), but more discussion
is required for the cut-off.

We first discuss the case $s\leq \frac{t}{2}$.
Following a similar argument as in the resonant case we bound $|z(t)-z(s)|\geq
\frac{1}{2}$ below and apply \eqref{eq:Cb} to arrive at bound by
\begin{align}
  \label{eq:mainrestriction}
  \begin{split}
  & \quad k^{1-\alpha} \langle t \rangle^{\sigma} |l|^{\alpha-1} \langle l, ls \rangle^{-\sigma}\langle s \rangle^{-\sigma+1} \langle k,kt\rangle^{-\sigma} C^{-3\sigma} \\
  &\leq C^{-3\sigma} k^{1-\alpha -\sigma}  |l|^{\alpha-1} \langle l, ls \rangle^{-\sigma}\langle s \rangle^{-\sigma+1}.
\end{split}
\end{align}
Here we estimated $C^{-\frac{\sigma}{\gamma}}\leq C^{-3\sigma}$ for simplicity
of notation.

For $s\geq \frac{t}{2}$ also further discussion is needed. If $l\geq \frac{k}{2}$, we
simply bound by
\begin{align*}
  & \quad\int_I \epsilon  \langle l,l t\rangle ^{-\sigma} |k|^{-\alpha}|l|^{\alpha}\frac{kt}{l^2} ds \\
  & \leq \langle t \rangle^{-\sigma+2} l^{-\sigma+\alpha-2} |k|^{-\alpha+1},
\end{align*}
which is summable in $l\geq \frac{k}{2}$ provided $\sigma>2$.

For $|l|\leq \frac{k}{2}$ we instead can only use that $|l|^{\alpha-2}$ is
summable and need to show that
\begin{align*}
  \int_{I} \epsilon &e^{-C\epsilon^{\beta} (z(t)-z(s)) \cutoff{k,kt}^{1/3}} |k|^{1-\alpha}(t-s) \langle t \rangle^{-\sigma} ds
\end{align*}
is uniformly bounded. Following the argument of \cite{grenier2020landau} we
employ a variant of \eqref{eq:Cb} with the exponent $3(1-\alpha)$ when
\emph{below the cut-off} to obtain
\begin{align*}
  |t-s|^{1-3(1-\alpha)} t^{3(1-\alpha)} k^{1-\alpha} \langle k,kt\rangle^{-(1-\alpha)}\langle t \rangle^{-\sigma}.
\end{align*}
At this point we require that
\begin{align*}
  |t-s|^{1-3(1-\alpha)}
\end{align*}
is locally integrable and hence that
\begin{align*}
  1-3(1-\alpha) > -1 \Leftrightarrow \alpha> \frac{1}{3}.
\end{align*}
Integrating and bounding 
\begin{align*}
  \int_{I} |t-s|^{1-3(1-\alpha)} ds \leq \langle t \rangle^{2-3(1-\alpha)},
\end{align*}
We hence arrive at a bound by
\begin{align*}
  \langle t \rangle^{2-\sigma} \leq 1.
\end{align*}
If we are \emph{above the cut-off} we argue similarly and apply \eqref{eq:Cb}
with the exponent $\frac{1-\alpha}{\gamma}$, which imposes the stronger
constraint
\begin{align}
  \label{eq:alphaconstraint}
  1-\frac{1-\alpha}{\gamma} > -1 \Leftrightarrow \alpha > 1-2\gamma.
\end{align}
Thus the transport case should be understood to impose constraints on $\alpha$
given $\gamma$.

\end{proof}

This concludes our estimate of $\rho(t)$ incorporating a frequency cut-off and
improved $\epsilon$ dependence.
The estimate of $g(t,x,v)=f(t,x-tv,v)$ in comparison uses a much more abstract and
lossy argument. In particular, we can follow the strategy of
\cite{grenier2020landau} more closely and only need to discuss the cut-off in
some detail.

\section{Control of $f(t,x-tv,v)$}
\label{sec:E1}
In this section we establish growth bounds on $E_1$ with $\rho(t,x)$ considered
given and, in particular, prove Proposition \ref{prop:E1}.
Here we argue similarly as in Proposition 4.1 of \cite{grenier2020landau}, but
need to account for the following changes:
\begin{itemize}
\item Our cut-off $\cutoff{\cdot,\cdot}$ include a factor $\epsilon^{\beta}$ in the exponent,
  hence compared to \cite{grenier2020landau} our derivative $\p_z$ is rescaled
  by $\epsilon^{-\beta}$.
\item Above the cut-off we use a different exponent $\gamma$, which we need to
  account for in our estimates.
\end{itemize}

\begin{proof}[Proof of Proposition \ref{prop:E1}]
  We follow the same strategy as in \cite[Proposition 4.1]{grenier2020landau}
  and denote our weight as
  \begin{align*}
    A_{k,\eta}:= e^{C z \cutoff{k,\eta}^{1/3}} \langle k,\eta\rangle^\sigma.
  \end{align*}
  We remark that these cut-offs preserve triangle inequalities and that
  $A_{k,\eta}\leq C A_{k-l,\eta-\xi}A_{l,\xi}$ satisfies an algebra property
  (with constant independent of $\eta_*$).

  Since we consider the case of perturbations around $0$, after a Fourier
  transform the Vlasov-Poisson equations read
  \begin{align*}
    \dt \tilde{g}(k,\eta) = -i \sum_{l} (\eta-kt) \tilde{F}(t,l) \tilde{g}(t,k-l, \eta-lt).
  \end{align*}
  Testing with $A_{k,\eta}^2\tilde{g}(k,\eta)$, we note that by anti-symmetry
  \begin{align*}
    \int \sum_{k,l} -i (\eta-kt) \tilde{F}(t,l) A_{k-l,\eta-lt}\tilde{g}(t,k-l,\eta-lt) A_{k,\eta} \tilde{g}(t,k,\eta) d\eta =0
  \end{align*}
  and hence our estimate reduces to controlling
  \begin{align}
    \label{eq:gest}
   \int -i\sum_{k,l} (\eta-kt) \frac{A_{k,\eta}-A_{k-l,\eta-lt}}{A_{l,lt}A_{k-l,\eta-lt}} A\tilde{F}(t,l) A\tilde{g}(t,k-l, \eta-lt)  A\tilde{g}(t,k,\eta) d\eta.
  \end{align}
  in terms of $\p_z E_1$.

  Here we argue as in \cite{grenier2020landau} and first estimate
  \begin{align*}
    (\eta-kt) \frac{A_{k,\eta}-A_{k-l,\eta-lt}}{A_{l,lt}A_{k-l,\eta-lt}}
  \end{align*}
  beginning with the case where
  \begin{align*}
    \langle l,lt \rangle \geq \frac{1}{2} \langle k, \eta \rangle.
  \end{align*}
  Then it follows that
  \begin{align}
    \label{eq:gcase1}
    \begin{split}
    & \quad \langle t \rangle \langle k,\eta \rangle \frac{A_{k,\eta}-A_{k-l,\eta-lt}}{A_{l,lt}A_{k-l,\eta-lt}} \\
    &\leq \langle t \rangle \langle k,\eta \rangle \frac{\langle k,\eta \rangle^{\sigma} + \langle k-l,\eta-lt \rangle^{\sigma}}{\langle l,lt \rangle^{\sigma}\langle k-l,\eta-lt \rangle^{\sigma}}\\
    & \leq \langle t \rangle (\langle k-l \rangle^{-\sigma+1} + \langle l \rangle^{-\sigma+1}).
  \end{split}
  \end{align}
  If instead $\langle l,lt\rangle \leq \frac{1}{2} \langle k,\eta \rangle$ is
  possibly much smaller, we need to exploit cancellation in
  \begin{align*}
  \langle k,\eta \rangle (A_{k,\eta} - A_{k-l, \eta-lt}).
  \end{align*}
  More precisely, we recall that $A_{k,\eta}$ is of the form
  \begin{align*}
    \langle x \rangle^{\sigma} e^{\cutoff{x}}.
  \end{align*}
  We thus split
  \begin{align*}
    \langle x \rangle^{\sigma} e^{\cutoff{x}} - \langle x+y \rangle^{\sigma} e^{\cutoff{x+y}}\\
    =  (\langle x \rangle^{\sigma} - \langle x+y \rangle^{\sigma}) e^{\cutoff{x}}\\
    + \langle x+y \rangle^{\sigma} (e^{\cutoff{x}}- e^{\cutoff{x+y}}),
  \end{align*}
  By the chain rule and the intermediate value theorem
  \begin{align*}
    |\langle x \rangle^{\sigma} - \langle x+y \rangle^{\sigma}| \leq  c_{\sigma}  \frac{\langle y \rangle}{\langle x \rangle} \langle x \rangle^{\sigma},
  \end{align*}
  where we used that $\frac{|x|}{2} \leq |x+y|\leq 2|x|$.
  Hence, for that part we easily obtain a bound by
  \begin{align*}
    \langle l,lt \rangle (A_{k,\eta} - A_{k-l, \eta-lt}).
  \end{align*}
  It thus remains to discuss the difference
  \begin{align*}
    e^{\cutoff{x}}- e^{\cutoff{x+y}}.
  \end{align*}
  Since $x$ and $x+y$ are of comparable magnitude and we consider a smooth
  cut-off (that is, matching powers $\beta$, $\beta'$), it suffices to consider
  the case when either both $x$ and $x+y$ are above or below the cut-off.
  In either case this hence reduces to considering
  \begin{align*}
    & \quad  e^{C_1 \langle x \rangle^{\theta}} - e^{C_1 \langle x+y \rangle^{\theta}} = \int_0^1 \frac{d}{d\tau} e^{C_1 \langle x+ \tau y \rangle^{\theta}} d\tau\\
    &\leq \int_0^1 C_1 \theta \langle y \rangle \langle x+ \tau y \rangle^{\theta-1} e^{C_1 \langle x+ \tau y \rangle^{\sigma}} d\tau \\
    & \leq 2 C_1 \langle y \rangle \langle x \rangle^{\theta-1} (e^{C_1 \langle x \rangle^{\theta}} + e^{C_1 \langle x+y \rangle^{\theta}}) \\
    & \leq 2 \frac{\langle y \rangle}{\langle x \rangle}  (C_1\langle x \rangle^{\theta} e^{C_1 \langle x \rangle^{\theta}}  +C_1 \langle x+y \rangle^{\theta} e^{C_1 \langle x+y \rangle^{\theta}} ).
  \end{align*}
  We remark that here by our notational conventions
  \begin{align*}
    C_1\langle x \rangle^{\theta} e^{C_1 \langle x \rangle^{\theta}} = C^{-1} \epsilon^{-\beta} \p_z e^{Cz \cutoff{x}}.
  \end{align*}

  We hence obtain the desired control and may conclude as in
  \cite{grenier2020landau}. In the interest of a self-contained presentation, we
  recall the proof below:
  Inserting the previous estimates \eqref{eq:gest} may be controlled by
  \begin{align*}
  \epsilon^{-\beta} \sum_{k,l} \langle t \rangle (\langle k-l \rangle^{-\sigma+1} + \langle l \rangle^{-\sigma+1})\\
    \int  (AF)_{l,lt} \sqrt{1+\cutoff{k,\eta}}(Ag)_{k,\eta} \sqrt{1+\cutoff{k-l,\eta-lt}}(Ag)_{k-l,\eta-lt}.
  \end{align*}
  Here we now make use of the power $|l|^{-\alpha}$ in the definition of $E_2$
  and the fact that $F_{l}=\frac{1}{il}\rho_{l}$
  and bound
  \begin{align*}
    |(AF)_{l,lt}| \leq \langle l \rangle^{\alpha-1} E_2.
  \end{align*}
  where
  \begin{align*}
    \langle l \rangle^{\alpha-1}  \in \ell^2(\Z) \Leftrightarrow \alpha<\frac{1}{2}. 
  \end{align*}
  Thus, by Young's and Hölder's inequality we over all obtain a control by
  \begin{align*}
    c \epsilon^{-\beta} E_{2} (1+\p_z)E_1. 
  \end{align*}
  The estimate for $\p_{\eta}g_{k\eta}$ follows analogously and is hence
  omitted (see also \cite[Proposition 4.1]{grenier2020landau}). 
\end{proof}

To the author's knowledge this is the first nonlinear Landau damping result (for
finite times $\epsilon^{-N}$) in Gevrey classes larger than $3$ (see also Lemma
\ref{lem:trivial} and \cite{grenier2022plasma}).
As mentioned throughout the proofs, we expect that optimal classes are
determined by plasma echoes as captured in the first model problem of Section \ref{sec:model}. However, the method
of \cite{grenier2020landau} which we adapt trades further decay for
simplicity of proof. That is, the additional requirement that 
\begin{align*}
  E_2 \leq \epsilon (1+t)^{-\sigma+1}
\end{align*}
with $\sigma>3$ further restricts our choice of parameters and seems to be
optimal for this method of proof.

While we for simplicity here have considered the case of Landau damping around the
special case $f=0$, we expect that these results can be extended to Penrose
stable equilibria by similar arguments as in \cite{grenier2020landau}. Moreover,
in future work we plan to establish nonlinear stability of the wave-type
solutions underlying the echo chain construction of
\cite{bedrossian2016nonlinear} and \cite{zillinger2020landau} in suitable Gevrey classes.

\subsection*{Acknowledgments}
Funded by the Deutsche Forschungsgemeinschaft (DFG, German Research Foundation) – Project-ID 258734477 – SFB 1173.

\bibliography{citations2}
\bibliographystyle{alpha}

\end{document}